\newcommand\eq[1]{(\ref{eq:#1})}
\newtheorem{theorem}{Theorem}
\newtheorem{lemma}{Lemma}
\newtheorem{remark}{Remark}
\newtheorem{proposition}{Proposition}
\newtheorem{corollary}{Corollary}
\def\endpf{{\ \hfill\hbox{\vrule width1.0ex height1.0ex}\parfillskip 0pt
	}}
	\newenvironment{proof}{\noindent{\bf Proof:}}{\endpf}
\begin{document}

\title{Minimizing a stochastic convex function subject to stochastic constraints and some applications}
\author{Royi Jacobovic\thanks{Department of Statistics and data science; The Hebrew University of Jerusalem; Jerusalem 9190501; Israel.
{\tt royi.jacobovic@mail.huji.ac.il, offer.kella@gmail.com}}\and Offer Kella\footnotemark[1] \thanks{Supported by grant No. 1647/17 from the Israel Science Foundation and
the Vigevani Chair in Statistics.}}

\date{February 26, 2020}
		\maketitle
		\begin{abstract}
In the simplest case, we obtain a general solution to a problem of minimizing an integral of a nondecreasing right continuous stochastic process from zero to some nonnegative random variable $\tau$, under the constraints that for some nonnegative random variable $T$, $\tau\in[0,T]$ almost surely and $E\tau=\alpha$ (or $E\tau\le \alpha$) for some $\alpha$. The nondecreasing process and $T$ are allowed to be dependent. In fact a more general setup involving $\sigma$ finite measure, rather than just probability measures is considered and some consequences for families of stochastic processes are given as special cases. Various applications are provided.
 		\end{abstract}

\bigskip
\noindent {\bf Keywords:} Stochastic constrained minimization. Minimizing a stochastic convex function. Quadratic function with random coefficients. Clearing process. Constrained portfolio optimization. Neyman-Pearson lemma.

\bigskip
\noindent {\bf AMS Subject Classification (MSC2010):} 90C15, 60G99.

		\section{Introduction}
		Motivated by the example described in Section \ref{sec: Levy storage}, this work presents an approach to solve a certain kind of stochastic programming problems. For general reviews about stochastic programming problems see, \textit{e.g}., \cite{Rusc2003,prekopa2013}.   Given a general probability space, we were initially motivated by finding an optimal random variable $\tau$ that minimizes $E\int_0^\tau \xi(s)ds$, where $\xi(\cdot)$ is a nondecreasing right continuous stochastic process such that $\xi(t)<\infty$ for every $t\ge 0$, subject to two types of constraints. The first is $P(0\le \tau\le T)=1$ where $T$ is some random variable (possibly infinite) which is not necessarily independent of $\xi(\cdot)$. The second constraint is $E\tau=\alpha$ where $\alpha\in[0,ET]$. It turns out that there is a precise and quite clean representation of the optimal $\tau$ in terms of the pseudo-inverse process associated with $\xi(\cdot)$. In particular, $\varphi(t):=\int_0^t\xi(s)ds$ may be non-differentiable with positive probability. This makes the current work related to non-differentiable convex  optimization. For references regarding deterministic non-differentiable optimization see, \textit{e.g}., \cite{Goffin2002,Lemarechal1989} and Section 11 of  \cite{Zornig2014}. For works about stochastic non-differentiable convex optimization see, \textit{e.g}., \cite{Bertsekas1973,Ermoliev1983,Plambeck1996}. Section \ref{sec:main} includes the main results of this paper. In fact, they are shown for a somewhat more general setup involving $\sigma$-finite measures rather than just probability measures. As to be shown later, this description is useful in various applications to which the other sections are devoted. Section \ref{sec:deterministic} is about the case where $\xi(\cdot)$ is a deterministic function. Section \ref{sec: Levy storage} describes the initial motivation for the current research. It is about a L\'evy-driven storage queue with a controller who picks an output rate in order to minimize the long-run average cost given a certain cost structure. Section \ref{sec:quadratic} refers to a special case when $\xi(\cdot)$ is a strictly increasing linear function with random coefficients. Section \ref{sec:linear} is about the special case where $\xi(\cdot)$ does not depend on $t$. We argue that the results for this case can be applied in hypothesis testing. Section \ref{sec:portfolio} applies the main results in order to solve a problem resulting from the martingale method for solving a dynamic portfolio optimization problem in a continuous time complete market (for details see, \textit{e.g}., Section 3 of \cite{Korn1997}). Section \ref{sec:clearing} is an application of the current method to find an optimal clearing time for a general clearing model with fixed setup cost and nondecreasing holding cost function (not necessarily linear). For some background on clearing processes see (among others) \cite{kella98,kellastadje15,kellayor10,stidham73,stidham77,whitt81}. In Section \ref{sec:renewal} we consider the case where $\xi(\cdot)$ is a renewal counting process and $T$ is independent of $\xi(\cdot)$. When $T$ has an exponential distribution there is a particularly explicit formula for the solution. Section \ref{sec:separable} shows that the current results can be applied to a deterministic setup of separable convex objective function with linear constraints. Finally, Section \ref{sec: regulation} is about price-regulation of an $M/G/1$ queue with customers having nonincreasing stochastic marginal utilities.

\section{The main results}\label{sec:main}
Denote $x^+=x\vee 0$, $x^-=-x\wedge 0$, where $x\vee y=\max(x,y)$, $x\wedge y=\min(x,y)$. Also, for some function $f$, whenever the limits exist, we denote $f(t+)=\lim_{s\downarrow t}f(s)$ and $f(t-)=\lim_{s\uparrow t}f(s)$. As usual, $\mu$-{\em a.s.} abbreviates {\em almost surely} with respect to some (sigma finite or probability) measure $\mu$ and for a sigma finite measure space $(X,\mathcal{X},\mu)$ and $\mathcal{X}$-measurable $\zeta:X\to[-\infty,\infty]$ we denote $\mu\zeta=\int_X \zeta d\mu$ (whenever either $\mu\zeta^+<\infty$ or $\mu\zeta^-<\infty$). For the special case where $\mu$ is a probability measure then we write $E\zeta$ (expected value) instead of $\mu\zeta$. Finally, $\nu\ll\mu$ is for {\em $\nu$ is absolutely continuous with respect to $\mu$}.

From here on when we write $\inf\{t|t\in A\}$ we mean $\inf\{t|t\in A\cap[0,\infty)\}$. When $A\cap(0,\infty)$ is empty, the infimum is defined to be~$\infty$. The following is the main idea that leads to our main result.
\begin{lemma}\label{lem:main}
Let $\varphi:[0,\infty)\to\mathbb{R}$ be convex, right continuous at zero (hence, continuous on $[0,\infty)$), with right derivative $\xi$ (necessarily nondecreasing and right continuous). For $\lambda\in\mathbb{R}$, denote
\begin{equation}
\eta(\lambda)=\inf\{t|\xi(t)\ge \lambda\}\,,
\end{equation}
where $\eta(\lambda)=\infty$ if $\{t|\xi(t)\ge \lambda\}$ is empty.
For a given $T\in[0,\infty]$ (possibly infinite) and $\lambda\in \mathbb{R}$ let
\begin{equation}\label{eq:taulambda}
\tau_\lambda=\eta(\lambda)\wedge T\ .
\end{equation}
If $\tau_\lambda<\infty$, then for every finite $t\in[0,T]$
\begin{equation}\label{eq:lambda}
\varphi(t)\ge \varphi(\tau_\lambda)+\lambda(t-\tau_\lambda)\ .
\end{equation}
Moreover, if $\tau_u<\infty$ for some $u>\lambda$ then also
\begin{equation}\label{eq:lambda+}
\varphi(t)\ge\varphi(\tau_{\lambda+})+\lambda(t-\tau_{\lambda+})\ ,
\end{equation}
where $\tau_{\lambda+}=\eta(\lambda+)\wedge T$.

\end{lemma}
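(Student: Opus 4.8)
The plan is to exploit the defining property of the convex function $\varphi$: since $\xi$ is its right derivative, $\lambda$ is a subgradient of $\varphi$ at any point where $\xi$ straddles $\lambda$. Concretely, $\eta(\lambda)$ is the first time $\xi$ reaches $\lambda$, so just to the left of $\eta(\lambda)$ we have $\xi<\lambda$ and at $\eta(\lambda)$ (by right continuity) we have $\xi\ge\lambda$. This means $\lambda$ lies between the left and right derivatives of $\varphi$ at $\eta(\lambda)$, which is exactly the condition for $\lambda$ to be a subgradient there. The inequality \eq{lambda} is then just the subgradient inequality $\varphi(t)\ge\varphi(s)+\lambda(t-s)$ with $s=\tau_\lambda$.

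First I would handle the case $\tau_\lambda=\eta(\lambda)$ (i.e.\ $\eta(\lambda)\le T$). I would verify that $\lambda$ is a subgradient of $\varphi$ at $\eta(\lambda)$ by checking the two one-sided bounds: the right derivative satisfies $\varphi'_+(\eta(\lambda))=\xi(\eta(\lambda))\ge\lambda$ by definition of the infimum together with right continuity of $\xi$, while the left derivative satisfies $\varphi'_-(\eta(\lambda))=\xi(\eta(\lambda)-)\le\lambda$ since $\xi(s)<\lambda$ for all $s<\eta(\lambda)$. For a convex function, any value between the left and right derivatives is a subgradient, which yields \eq{lambda} for all $t\ge 0$, in particular for finite $t\in[0,T]$. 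Second I would handle the case $\tau_\lambda=T<\eta(\lambda)$; here $\xi(s)<\lambda$ for all $s<\eta(\lambda)$, so in particular $\xi(T-)\le\lambda$, and the subgradient inequality at $T$ from the left gives \eq{lambda} for $t\le T$ (which is all that is claimed). I expect the mild care needed in separating these two cases — and in reading off the correct one-sided derivative bounds from the definition of $\eta(\lambda)$ — to be the only real subtlety, since everything rests on elementary convexity.

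For the second assertion \eq{lambda+}, I would argue that the hypothesis $\tau_u<\infty$ for some $u>\lambda$ guarantees $\eta(\lambda+)<\infty$, so that $\tau_{\lambda+}$ is well defined and finite, and then run the identical subgradient argument with the quantity $\eta(\lambda+)=\inf\{t\mid\xi(t)>\lambda\}$ in place of $\eta(\lambda)$. The key point is that at $\eta(\lambda+)$ the left derivative obeys $\xi(\eta(\lambda+)-)\le\lambda$, because $\xi(s)\le\lambda$ for every $s<\eta(\lambda+)$, so $\lambda$ is again a subgradient of $\varphi$ at $\tau_{\lambda+}$ (this time using $\lambda$ as a valid value even though the right derivative may strictly exceed it). The main obstacle, if any, is bookkeeping around infinite values: I would make sure throughout that the finiteness hypotheses $\tau_\lambda<\infty$ (resp.\ $\tau_u<\infty$) are exactly what is needed so that the points at which we anchor the subgradient inequality are finite and $\varphi$ is finite there, and then the inequalities extend to all finite $t\in[0,T]$ by convexity.
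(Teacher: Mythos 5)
Your argument is correct, but it runs on different mechanics than the paper's proof. The paper avoids one-sided derivatives and case analysis altogether: it writes $\varphi(t)-\lambda t=\int_0^t(\xi(s)-\lambda)ds$ and observes that, since $\xi(s)\ge\lambda$ exactly when $s\ge\eta(\lambda)$, this function is decreasing on $[0,\tau_\lambda)$ and nondecreasing on $[\tau_\lambda,T]\cap[\tau_\lambda,\infty)$; hence $\tau_\lambda$ is a global minimizer of $\varphi(t)-\lambda t$ on $[0,T]$, which is just a restatement of \eq{lambda}. It then obtains \eq{lambda+} in one line, by applying \eq{lambda} at $u>\lambda$, letting $u\downarrow\lambda$ (so that $\tau_u\downarrow\tau_{\lambda+}$) and invoking continuity of $\varphi$. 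You instead verify the subgradient condition directly: $\lambda\in[\varphi'_-(\eta(\lambda)),\varphi'_+(\eta(\lambda))]$ when $\tau_\lambda=\eta(\lambda)$, while in the boundary case $\tau_\lambda=T<\eta(\lambda)$ you use $\xi(T-)\le\lambda$ together with the sign of $t-T$; and for \eq{lambda+} you rerun the same verification at $\eta(\lambda+)=\inf\{t|\xi(t)>\lambda\}$ rather than taking a limit. Both proofs rest on the same elementary fact ($\xi(t)\ge\lambda$ if and only if $t\ge\eta(\lambda)$); the paper's version is shorter and needs no case split, while yours makes explicit that $\lambda$ itself is a subgradient at the anchor point --- which is precisely the content emphasized in the remark following the lemma --- and it delivers \eq{lambda+} without a limiting argument.

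Two small points to tighten. First, your claim that $\tau_u<\infty$ for some $u>\lambda$ ``guarantees $\eta(\lambda+)<\infty$'' is not accurate: it guarantees only $\tau_{\lambda+}\le\tau_u<\infty$, and $\eta(\lambda+)$ may well be infinite (e.g., $T<\infty$ with $\xi$ bounded above by $\lambda$). This is harmless, since $\tau_{\lambda+}<\infty$ is what your argument actually uses and your anchor-at-$T$ case covers exactly the situation $\eta(\lambda+)>T$, but the sentence should say so. Second, in the case $\eta(\lambda)=0$ (or $T=0$) the left derivative at the anchor does not exist; there only the right-derivative half of your verification is needed, which is worth a parenthetical remark.
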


Observe that if we would replace $\lambda(t-\tau_\lambda)$ on the right side of \eq{lambda} by $\xi(\tau_\lambda)(t-\tau_\lambda)$ then the resulting inequality would be an immediate consequence of convexity (since $\xi(\tau_\lambda)$ is a subgradient at $\tau_\lambda$) and would be valid for any choice of $\tau_\lambda$, not necessarily the one defined in \eq{taulambda}. However, for what follows, it is important to have $\lambda$ rather than $\xi(\tau_\lambda)$ as the multiplier.

\medskip
\begin{proof}
It is well known that $\xi(t)\ge \lambda$ if and only if $t\ge \eta(\lambda)$. Therefore, $\varphi(t)-\lambda t=\int_0^t(\xi(s)-\lambda)ds$ is decreasing on $[0,\eta(\lambda))$ (empty when $\eta(\lambda)=0$) and nondecreasing on $[\eta(\lambda),\infty)$ (empty when $\eta(\lambda)=\infty$). Hence, it is decreasing on $[0,\tau_\lambda)$ and nondecreasing on $[\tau_\lambda,T]\cap[\tau_\lambda,\infty)$. This implies that when $\tau_\lambda<\infty$, it minimizes $\varphi(t)-\lambda t$ on $[0,T]$. Thus, for every finite $t\in[0,T]$ and every $\lambda$ such that $\tau_\lambda<\infty$ we have that
\begin{equation}
\varphi(\tau_\lambda)-\lambda\tau_\lambda\le \varphi(t)-\lambda t
\end{equation}
which is equivalent to \eq{lambda}. Clearly, \eq{lambda+} follows from the continuity of $\varphi$.
\end{proof}

We now abuse the notation and instead of a function $\xi$, a nonnegative constant $T$ and a constant $\tau_\lambda$, from here on, these would now become functions of the form $\xi(t)=\xi(x,t)$, where we suppress the (functional) dependence on $x$. Although our main concern is with probability spaces (in which case $\xi$ is a stochastic process), it will prove useful to state the following more general result from which everything else follows. This is the main result of this paper. We will abbreviate $\zeta\in\mathcal{X}$ to mean that $\zeta$ is $\mathcal{X}$-measurable and $\tau\in[0,T]$ $\mu$-a.s. to mean that $\tau(x)\in[0,T(x)]$ for $\mu$-almost all $x\in X$.

\begin{theorem}\label{th:main1}
Given a measurable space $(X,\mathcal{X})$ and sigma finite measures $\nu,\mu$ such that $\nu\ll \mu$, assume that $\xi(t)\in\mathcal{X}$ for each $t\ge 0$, $\xi(t)=\xi(x,t)$ is right continuous and nondecreasing in $t$ for each $x\in X$ and $\mu$-a.s. finite for each $t>0$. Let $T=T(x)\in\mathcal{X}$ be $\mu$-a.s. nonnegative (possibly infinite) satisfying
\begin{equation}
\nu\int_0^T\xi(s)^-ds<\infty\ .
\end{equation}
For $\alpha\in(0,\mu T)$ and $\tau=\tau(x)$, consider
\begin{align}\label{eq:minalpha}
\min\ &\nu\int_0^\tau\xi(s)ds\nonumber\\
\text{s.t.}\ &\tau\in\mathcal{X}\nonumber\\
&\tau\in [0,T]\ \mu\text{-a.s.}\\
&\mu \tau=\alpha\ .\nonumber
\end{align}
Let $Y=d\nu/d\mu$ be a nonnegative finite version of the Radon-Nikodym derivative. With $\tau_\lambda=\inf\{t|Y\xi(t)\ge \lambda\}\wedge T$, if there exists a $\lambda$ satisfying $\mu \tau_{\lambda}=\alpha$ or $\mu \tau_{\lambda+}=\alpha$, then, respectively, $\tau_{\lambda}$ or $\tau_{\lambda+}$ solves \eq{minalpha}.
Otherwise, either $\mu \tau_{\lambda}=\infty$ for all $\lambda\in\mathbb{R}$ or there exists a $\lambda$ for which $\mu \tau_{\lambda}<\alpha<\mu \tau_{\lambda+}$. If $\mu \tau_{\lambda+}<\infty$, let
\begin{equation}\label{eq:p}
q\equiv\frac{\alpha-\mu\tau_{\lambda}}{\mu\tau_{\lambda+}-\mu \tau_{\lambda}}\ .
\end{equation}
Then, $(1-q)\tau_{\lambda}+q\tau_{\lambda+}$ solves \eq{minalpha}.
\end{theorem}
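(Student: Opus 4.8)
The strategy is to use Lemma~\ref{lem:main} as a Lagrangian-type pointwise inequality and then integrate it against $\mu$. Since $\nu\ll\mu$ with density $Y=d\nu/d\mu$, we have $\nu\int_0^\tau\xi(s)ds=\mu\int_0^\tau Y\xi(s)ds$, so the objective in \eq{minalpha} is $\mu\varphi(\tau)$ where $\varphi(x,t):=\int_0^t Y(x)\xi(x,s)ds$ is, for $\mu$-almost every $x$, a convex right-continuous function of $t$ with right derivative $Y\xi(t)$. Applying Lemma~\ref{lem:main} with this $\varphi$ (and the same $T$), for the choice $\tau_\lambda=\inf\{t\mid Y\xi(t)\ge\lambda\}\wedge T$ we obtain, $\mu$-a.s. and for every competing feasible $\tau$ (which satisfies $\tau(x)\in[0,T(x)]$),
\begin{equation}
\varphi(\tau)\ge\varphi(\tau_\lambda)+\lambda(\tau-\tau_\lambda)\,.
\end{equation}

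First I would integrate this inequality against $\mu$. This yields $\mu\varphi(\tau)\ge\mu\varphi(\tau_\lambda)+\lambda(\mu\tau-\mu\tau_\lambda)$. If $\lambda$ is chosen so that $\mu\tau_\lambda=\alpha$, then since any feasible $\tau$ has $\mu\tau=\alpha$, the correction term $\lambda(\mu\tau-\mu\tau_\lambda)=\lambda(\alpha-\alpha)=0$ vanishes, and we conclude $\mu\varphi(\tau)\ge\mu\varphi(\tau_\lambda)$, i.e.\ $\tau_\lambda$ is optimal. The identical argument using \eq{lambda+} handles the case $\mu\tau_{\lambda+}=\alpha$ with $\tau_{\lambda+}$ in place of $\tau_\lambda$. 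This disposes of the first sentence of the conclusion.

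For the remaining case, I would first establish the dichotomy: the map $\lambda\mapsto\mu\tau_\lambda$ is nondecreasing in $\lambda$ (larger $\lambda$ enlarges the hitting time $\eta(\lambda)$ pointwise, hence enlarges $\tau_\lambda$), and $\mu\tau_{\lambda+}$ is its right-continuous companion with $\mu\tau_\lambda\le\mu\tau_{\lambda+}$. If no $\lambda$ produces exactly $\alpha$ at either endpoint, then monotonicity forces either $\mu\tau_\lambda\equiv\infty$ for all $\lambda$, or the existence of a single $\lambda$ straddling $\alpha$, namely $\mu\tau_\lambda<\alpha<\mu\tau_{\lambda+}$. Assuming $\mu\tau_{\lambda+}<\infty$, I would verify the interpolant $\tau^\star:=(1-q)\tau_\lambda+q\tau_{\lambda+}$ with $q$ as in \eq{p} is feasible: it is $\mathcal{X}$-measurable, lies in $[0,T]$ $\mu$-a.s.\ (as a convex combination of two points of $[0,T]$), and by construction $\mu\tau^\star=(1-q)\mu\tau_\lambda+q\mu\tau_{\lambda+}=\alpha$. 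For optimality I would integrate \emph{both} \eq{lambda} and \eq{lambda+} against $\mu$ for an arbitrary feasible $\tau$, take the convex combination with weights $1-q$ and $q$, and observe that the linear correction terms combine to $\lambda(\mu\tau-\mu\tau^\star)=\lambda(\alpha-\alpha)=0$; this gives $\mu\varphi(\tau)\ge(1-q)\mu\varphi(\tau_\lambda)+q\mu\varphi(\tau_{\lambda+})$.

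The last step requires care, since the right-hand side is not obviously $\mu\varphi(\tau^\star)$ — $\varphi$ is convex, not linear, so $\varphi(\tau^\star)\le(1-q)\varphi(\tau_\lambda)+q\varphi(\tau_{\lambda+})$ goes the \emph{wrong} way for a direct comparison. I expect this to be the main obstacle. The resolution is that on the event where $\tau_\lambda$ and $\tau_{\lambda+}$ differ, $\xi$ (hence $Y\xi$) is constant equal to $\lambda/Y$ on the interval $[\tau_\lambda,\tau_{\lambda+}]$ (this is precisely where the pseudo-inverse has a flat, corresponding to a jump of $\eta$), so $\varphi$ is \emph{affine} with slope $\lambda$ on that interval. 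Consequently $\varphi(\tau^\star)=(1-q)\varphi(\tau_\lambda)+q\varphi(\tau_{\lambda+})$ holds pointwise $\mu$-a.s., turning the convexity inequality into an equality. Integrating this identity and combining with the bound from the previous paragraph yields $\mu\varphi(\tau)\ge\mu\varphi(\tau^\star)$, establishing optimality of $\tau^\star$ and completing the proof.
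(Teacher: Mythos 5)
Your overall strategy is the same as the paper's: reduce to $\nu=\mu$ via the density $Y$, integrate the pointwise inequalities of Lemma~\ref{lem:main} against $\mu$ so that the multiplier terms $\lambda(\mu\tau-\mu\tau_\lambda)$ vanish under the constraint $\mu\tau=\alpha$, and treat the non-exact case with the interpolant $\tau^\star=(1-q)\tau_\lambda+q\tau_{\lambda+}$. But two points need attention. The step you flag as ``the main obstacle'' is not an obstacle: your diagnosis of the convexity direction is backwards. Writing $\varphi(t)=\int_0^t Y\xi(s)\,ds$, you need $\mu\varphi(\tau^\star)\le\mu\varphi(\tau)$, and you already have $(1-q)\mu\varphi(\tau_\lambda)+q\mu\varphi(\tau_{\lambda+})\le\mu\varphi(\tau)$; convexity gives pointwise $\varphi(\tau^\star)\le(1-q)\varphi(\tau_\lambda)+q\varphi(\tau_{\lambda+})$, which upon integration is exactly the missing link $\mu\varphi(\tau^\star)\le(1-q)\mu\varphi(\tau_\lambda)+q\mu\varphi(\tau_{\lambda+})$. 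This is precisely the first inequality in \eq{13} of the paper's proof. Your substitute argument --- that $Y\xi\equiv\lambda$ on $[\tau_\lambda,\tau_{\lambda+})$ whenever $\tau_\lambda<\tau_{\lambda+}$, so $\varphi$ is affine there and the convexity inequality is an equality --- is correct (the paper records this equality in Remark~\ref{rem:mixed}), so your optimality argument does go through; it simply proves more than is needed by a longer route.

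The genuine gap is in the dichotomy. The claim ``either $\mu\tau_\lambda=\infty$ for all $\lambda$ or some $\lambda$ satisfies $\mu\tau_\lambda<\alpha<\mu\tau_{\lambda+}$'' is part of the theorem's assertion, and monotonicity of $\lambda\mapsto\mu\tau_\lambda$ alone does not force it: a nondecreasing function could satisfy $\mu\tau_\lambda>\alpha$ for every $\lambda$ (never hitting $\alpha$, never straddling it, and not identically infinite), and nothing in your sketch excludes this. What excludes it are the boundary limits that the paper establishes: by \eq{tauiff} and the $\mu$-a.s. finiteness of $\xi(t)$, one has $\tau_\lambda\to 0$ $\mu$-a.s. as $\lambda\to-\infty$ and $\tau_\lambda\uparrow T$ as $\lambda\to\infty$; hence $\mu\tau_\lambda\to\mu T>\alpha$ by monotone convergence, and --- once $\mu\tau_{\lambda_0}<\infty$ for some $\lambda_0$, i.e.\ outside the all-infinite alternative --- $\mu\tau_\lambda\to 0<\alpha$ as $\lambda\to-\infty$ by dominated convergence. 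Only with these two limits (combined with monotonicity) does the straddling $\lambda$ exist. Separately, your feasibility check asserts but does not verify that $\tau_\lambda\in\mathcal{X}$; the paper obtains this from $\{\tau_\lambda\le t\}=\{Y\xi(t)\ge\lambda\}\cup\{T\le t\}$ together with joint measurability of $\xi$, which follows from right continuity.
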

Observe that any version of $Y=\frac{d\nu}{d\mu}$ is $\mu$-a.s. nonnegative and finite. Thus, we can always replace it by $Y1_{\left(0,\infty\right)}(Y)$ to obtain the nonnegative finite version assumed in Theorem \ref{th:main1}. We note that if one prefers that $\xi(t)$ is right continuous and nondecreasing $\mu$-a.s. rather than for every $x\in X$, then in addition one needs to assume in Theorem~\ref{th:main1} that $(X,\mathcal{X},\mu)$ is complete. This is a technical nuisance which we prefer to avoid here.

\medskip

\begin{proof}
	We first observe that the assumption $\nu\int_0^T\xi(s)^-ds<\infty$ is needed in order to insure that $\nu\int_0^\tau\xi(s)ds$ is well defined (possibly infinite) for each $\mathcal{X}$-measurable $\tau\in[0,T]$ $\mu$-a.s.
	
	Next we note that since $\xi(t)$ is nondecreasing and right continuous for each $x\in X$ and is $\mu$-a.s. finite for each $t>0$, then so is $Y\xi(t)$. Thus there is no loss of generality in assuming that $\nu=\mu$ (with $Y=1$).
	
	We recall that from right continuity it follows that as a function of $(x,t)$, $\xi\in\mathcal{X}\otimes\mathcal{B}[0,\infty]$ (jointly measurable, \textit{e.g}., Remark~1.4 on p.~5 of~\cite{KS88}). Here $\mathcal{B}$ is for {\em Borel}. Thus $\int_0^t\xi(s)ds\in\mathcal{X}$ for each $t\ge 0$. Since
	\begin{equation}\label{eq:tauiff}
	\{\tau_\lambda\le t\}=\{\xi(t)\ge \lambda\}\cup\{T\le t\}\ ,
	\end{equation}
	this implies that $\tau_\lambda\in\mathcal{X}$ for each $\lambda$ (and, in fact, that it is jointly measurable as a function of $x,\lambda$, but this will not be needed here).
	
	If for some $\lambda$ either $\mu\tau_\lambda=\alpha$ or $\mu\tau_{\lambda+}=\alpha$ then we simply apply one of the inequalities \eq{lambda},\eq{lambda+} with $t=\tau$ and integrate with respect to $\mu$, observing that $\mu\left[\lambda(\tau-\tau_\lambda\right]=\lambda(\alpha-\alpha)=0$ or $\mu\left[\lambda(\tau-\tau_{\lambda+})\right]=\lambda(\alpha-\alpha)=0$ (since it is required that $\mu\tau=\alpha$).
	
	Recall that $\xi(t)$ is $\mu$-a.s. finite for every $t>0$. From \eq{tauiff} it follows that $\mu$-a.s. $\lim_{\lambda\to-\infty}\tau_\lambda=0$ and $\lim_{\lambda\to\infty}\tau_\lambda=T$.
	Thus, from monotone convergence $\mu\tau_\lambda\to\mu T$ as $\lambda\to\infty$ and, when $\mu\tau_\lambda<\infty$ for some $\lambda\in\mathbb{R}$, it converges by dominated convergence to $0$ as $\lambda\to-\infty$. Thus, when $\mu\tau_\lambda$ is not infinite for all $\lambda$, for each $\alpha\in(0,\mu T)$ such that there is no $\lambda$ for which $\mu\tau_\lambda=\alpha$ or $\mu\tau_{\lambda+}=\alpha$, we can take $\lambda$ such that
	\begin{equation}
	\mu\tau_{\lambda}<\alpha<\mu\tau_{\lambda+}\ .
	\end{equation}
	Assuming that the right side is finite (equivalent to $\mu\tau_u<\infty$ for some $u>\lambda$), then clearly, $(1-q)\tau_{\lambda}+q\tau_{\lambda+}\in[0,T]$ $\mu$-a.s. and $\mu\left[(1-q)\tau_{\lambda}+q\tau_{\lambda+}\right]=\alpha$. From \eq{lambda}, \eq{lambda+} and the convexity of $\int_0^t\xi(s)ds$ in $t$, we have that
	\begin{align}\label{eq:13}
	\mu\int_0^{(1-q)\tau_{\lambda}+q\tau_{\lambda+}}\xi(s)ds&\le(1-q)\mu\int_0^{\tau_{\lambda}}\xi(s)ds+qE\int_0^{\tau_{\lambda+}}\xi(s)ds\nonumber\\
	&\le (1-q)\left(\mu\int_0^\tau\xi(s)ds-\lambda(\alpha-\mu\tau_{\lambda})\right)\\
	&\quad+q\left(\mu\int_0^\tau\xi(s)ds-\lambda(\alpha-\mu\tau_{\lambda+})\right)\nonumber\\
	&=\mu\int_0^\tau\xi(s)ds\ .\nonumber
	\end{align}for every $\tau$ satisfying the constraints, so the proof is complete.
\end{proof}

We observe that when $\alpha=0$, every $\mathcal{X}$-measurable $\tau\in[0,T]$ $\mu$-a.s. with $\mu\tau=0$ necessarily satisfies that $\tau=0$ $\mu$-a.s. Similarly, when $\alpha=\mu T<\infty$, every $\mathcal{X}$-measurable $\tau\in[0,T]$ $\mu$-a.s. with $\mu\tau=\mu T$ necessarily satisfies that $\tau=T$ $\mu$-a.s. Also note that for $\alpha\not\in[0,ET]$ the problem is infeasible. Thus, these cases are trivial.

We note that when $\xi(t,x)$ is strictly increasing in $t$ for every $x$, then $\tau_\lambda$ is continuous in $\lambda$. Hence, if $\mu\tau_\lambda<\infty$ for all $\lambda$ (\textit{e.g}., when $\mu T<\infty$), then $\mu\tau_\lambda$ is continuous in $\lambda$ and for each $\alpha\in(0,\mu T)$ there is a $\lambda\in\mathbb{R}$ for which $\mu\tau_\lambda=\alpha$. Therefore, in this case there is no need to take a convex combination of $\tau_\lambda$ and $\tau_{\lambda+}$.

\begin{proposition}\label{prop:1}
When, in addition to the assumptions of Theorem~\ref{th:main1}, $\mu\tau_\lambda<\infty$ for all $\lambda\in\mathbb{R}$ (\textit{e.g}. when $\mu T<\infty$), with $\tau(\alpha)$ denoting the optimum of \eq{minalpha} (clearly nondecreasing in $\alpha$), $f(\alpha)=\nu\int_0^{\tau(\alpha)}\xi(s)ds$ is a convex function of $\alpha$ on $[0,\mu T]\cap[0,\infty)$. Moreover,
$\lim_{\alpha\uparrow \mu T}f(\alpha)=\nu\int_0^T\xi(s)ds$ (including the case that $T$ is not $\mu$-a.s. finite) and if $\nu\int_0^{\tau_\lambda}\xi(s)^+ds<\infty$ for some $\lambda$ then $\lim_{\alpha\downarrow0}f(\alpha)=0$.
\end{proposition}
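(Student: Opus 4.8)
The plan is to read off convexity of $f$ from the fact that it is the value function of the convex program \eq{minalpha}, with $\alpha$ entering only through the affine constraint $\mu\tau=\alpha$, and then to obtain the two endpoint limits by monotone/dominated convergence once the limiting behaviour of the optimizers $\tau(\alpha)$ at the two ends of $(0,\mu T)$ has been pinned down.

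For convexity, I would observe that the objective $\tau\mapsto\nu\int_0^\tau\xi(s)ds=\int_X(\int_0^{\tau(x)}\xi(x,s)ds)\,\nu(dx)$ is convex in $\tau$, since for each $x$ the map $t\mapsto\int_0^t\xi(x,s)ds$ is convex (its right derivative $\xi(x,\cdot)$ is nondecreasing); because $\nu\int_0^T\xi^-ds<\infty$ this integral is well defined with values in $(-\infty,\infty]$, so no $\infty-\infty$ arises and convexity may be read in the extended sense. The key structural point is that the feasible sets are jointly convex in $\alpha$: if $\tau_i$ is feasible for $\alpha_i$ and $p\in[0,1]$, then $\tau:=(1-p)\tau_1+p\tau_2$ is $\mathcal{X}$-measurable, lies in $[0,T]$ $\mu$-a.s., and satisfies $\mu\tau=(1-p)\alpha_1+p\alpha_2$. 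Taking $\tau_i=\tau(\alpha_i)$ optimal and using pointwise convexity of the integrand gives $f((1-p)\alpha_1+p\alpha_2)\le\nu\int_0^\tau\xi\le(1-p)f(\alpha_1)+pf(\alpha_2)$, which is exactly convexity of $f$.

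For the limit as $\alpha\uparrow\mu T$, I would use the canonical nondecreasing selection $\tau(\alpha)$ supplied by Theorem~\ref{th:main1}, so that $\tau(\alpha)\uparrow\tau^*\le T$ pointwise, and show $\tau^*=T$ $\mu$-a.s. Fix $\lambda$. When $\mu T=\infty$ the hypothesis $\mu\tau_\lambda<\infty$ guarantees $\mu\tau_\lambda<\mu T$, while if $\mu T<\infty$ the case $\mu\tau_\lambda=\mu T$ already forces $\tau_\lambda=T$ $\mu$-a.s. For any $\alpha\in(\mu\tau_\lambda,\mu T)$ the selecting multiplier $\lambda'$ must satisfy $\lambda'\ge\lambda$ (otherwise $\alpha\le\mu\tau_{\lambda'+}\le\mu\tau_\lambda<\alpha$), whence $\tau(\alpha)\ge\tau_{\lambda'}\ge\tau_\lambda$ and so $\tau^*\ge\tau_\lambda$. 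Letting $\lambda\to\infty$ and using $\tau_\lambda\uparrow T$ (as established in the proof of Theorem~\ref{th:main1}) gives $\tau^*\ge T$, hence $\tau^*=T$. Monotone convergence applied separately to $\xi^+$ and to $\xi^-$—the latter having finite $\nu$-integral over $[0,T]$—then yields $f(\alpha)=\nu\int_0^{\tau(\alpha)}\xi^+ds-\nu\int_0^{\tau(\alpha)}\xi^-ds\to\nu\int_0^T\xi\,ds$, valid also when $T$ is not $\mu$-a.s. finite. For the limit as $\alpha\downarrow0$, monotonicity gives $\tau(\alpha)\downarrow\tau_*\ge0$ with $\mu\tau_*=\lim_\alpha\alpha=0$ (by monotone convergence for the decreasing family, dominated by $\mu\tau(\alpha_0)=\alpha_0<\infty$), so $\tau_*=0$ $\mu$-a.s.; then $\nu\int_0^{\tau(\alpha)}\xi^-ds\to0$ by dominated convergence against $\nu\int_0^T\xi^-ds<\infty$. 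For the positive part I would invoke the extra hypothesis: since $\lambda\mapsto\nu\int_0^{\tau_\lambda}\xi^+ds$ is nondecreasing and finite at some $\lambda_0$, taking $\alpha_0=\mu\tau_{\lambda_0}>0$ (at which $\tau_{\lambda_0}$ is optimal) gives $\tau(\alpha)\le\tau_{\lambda_0}$ for all $\alpha\le\alpha_0$, so $\nu\int_0^{\tau(\alpha)}\xi^+ds\le\nu\int_0^{\tau_{\lambda_0}}\xi^+ds<\infty$ provides the domination, and $\nu\int_0^{\tau(\alpha)}\xi^+ds\to0$ as $\tau(\alpha)\downarrow0$; hence $f(\alpha)\to0$.

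The two convergence statements are routine once the endpoints of $\tau(\alpha)$ are identified. I expect the genuine work—and the only place the two extra hypotheses are truly used—to be establishing $\lim_{\alpha\uparrow\mu T}\tau(\alpha)=T$ and securing the $\xi^+$-domination near $0$: both rest on the monotonicity of the optimizers in the selecting multiplier and require a little care in the degenerate situations where $\mu\tau_\lambda$ jumps, where $\mu T=\infty$, or where $\tau_{\lambda_0}=0$ $\mu$-a.s. (in which case one replaces $\lambda_0$ by a slightly larger value, still inside the finiteness interval of $\nu\int_0^{\tau_\lambda}\xi^+ds$, with $\mu\tau_\lambda>0$).
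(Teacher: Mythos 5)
Your convexity argument coincides with the paper's: feasibility of $(1-u)\tau(\alpha)+u\tau(\beta)$ for the intermediate constraint level, then minimality of $\tau((1-u)\alpha+u\beta)$ together with pointwise convexity of $t\mapsto\int_0^t\xi(s)ds$. For the two limits, however, you take a genuinely different route. The paper never shows that the optimizers themselves converge pointwise; it observes that $\alpha\mapsto\nu\int_0^{\tau(\alpha)}\xi(s)^{\pm}ds$ are nondecreasing, so one-sided limits at $0$ and at $\mu T$ exist, and then identifies those limits along the special values $\alpha(\lambda)=\mu\tau_\lambda$, at which the optimizer is $\tau_\lambda$ itself and for which the $\lambda\to\pm\infty$ behaviour ($\tau_\lambda\to0$, $\tau_\lambda\to T$, plus dominated/monotone convergence, with the hypothesis $\nu\int_0^{\tau_{\lambda_0}}\xi(s)^+ds<\infty$ supplying the domination at $-\infty$) was already recorded in the proof of Theorem~\ref{th:main1}. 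You instead prove $\tau(\alpha)\to T$ $\mu$-a.s. as $\alpha\uparrow\mu T$ (via monotonicity of the selecting multiplier) and $\tau(\alpha)\to0$ $\mu$-a.s. as $\alpha\downarrow0$ (via $\mu\tau(\alpha)=\alpha\to0$), and then integrate. Your route is more hands-on and self-contained; the paper's is shorter precisely because it sidesteps any analysis of which multiplier is selected for which $\alpha$.

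There is, though, a genuine (if corner-case) gap at the step ``letting $\lambda\to\infty$ \dots\ gives $\tau^*\ge T$''. Your multiplier argument applies only to those $\lambda$ for which $(\mu\tau_\lambda,\mu T)$ is nonempty, and when $\mu T<\infty$ this set of $\lambda$'s may be bounded above, in which case $\sup\{\tau_\lambda:\mu\tau_\lambda<\mu T\}$ can be strictly smaller than $T$. Concretely: let $X$ be a single point with $\mu=\nu$ a unit mass, $\xi(t)=1_{[1,\infty)}(t)$, $T=2$. Then $\tau_\lambda=0$ for $\lambda\le0$, $\tau_\lambda=1$ for $0<\lambda\le1$, $\tau_\lambda=2$ for $\lambda>1$, so your argument yields only $\tau^*\ge1$; yet for $\alpha\in(1,2)$ the optimizer is the mixture $(1-q)\tau_1+q\tau_{1+}=\alpha$, and $\tau^*=2$ holds because the weight $q=(\alpha-\mu\tau_1)/(\mu T-\mu\tau_1)\to1$ --- a mechanism your proof never invokes. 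Your parenthetical remark that $\mu\tau_\lambda=\mu T$ forces $\tau_\lambda=T$ $\mu$-a.s. is true but does not close this, since those $\lambda$ are never selecting multipliers and hence say nothing about $\tau(\alpha)$. The patch is short: if $\lambda^*=\sup\{\lambda:\mu\tau_\lambda<\mu T\}<\infty$, then for $\alpha$ near $\mu T$ the solution involves $\tau_{\lambda^*}$ and $\tau_{\lambda^*+}$, where $\mu\tau_{\lambda^*+}=\mu T<\infty$ forces $\tau_{\lambda^*+}=T$ $\mu$-a.s., and $q\to1$ gives $\tau(\alpha)\to T$ anyway. (In fairness, the paper's subsequence argument is also terse in this same corner case, where $\alpha(\lambda)$ hits $\mu T$ exactly rather than approaching it from inside.) Finally, the degenerate case near $\alpha=0$ that you flag ($\tau_{\lambda_0}=0$ $\mu$-a.s.) is resolved more simply than by your unsupported ``slightly larger $\lambda$ still lies in the finiteness interval'': since $Y\xi(s)<\lambda$ on $[0,\tau_\lambda)$, one has $\nu\int_0^{\tau_\lambda}\xi(s)^+ds\le\lambda^+\mu\tau_\lambda<\infty$ for \emph{every} $\lambda$ under the proposition's standing assumption, so any $\lambda$ with $\mu\tau_\lambda>0$ (such $\lambda$ exist because $\mu\tau_\lambda\to\mu T>0$) can serve as $\lambda_0$.
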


\begin{proof}
If we take $u\in(0,1)$ and some finite $\alpha,\beta\in[0,ET]$, then
\begin{equation}
\mu((1-u)\tau(\alpha)+u\tau(\beta))=(1-u)\alpha+u\beta
\end{equation}
so that by minimality of $\tau((1-u)\alpha+u\beta)$ and convexity of $\int_0^t\xi(s)ds$ in $t$ we have that
\begin{align}
\nu\int_0^{\tau((1-u)\alpha+u\beta)}\xi(s)ds&\le \nu\int_0^{(1-u)\tau(\alpha)+u\tau(\beta)}\xi(s)ds\nonumber\\
&\le(1-u)\nu\int_0^{\tau(\alpha)}\xi(s)ds+u \nu\int_0^{\tau(\beta)}\xi(s)ds\ .
\end{align}
Now, we recall (see the proof of Theorem~\ref{th:main1}) that $\lim_{\lambda\to-\infty}\tau_\lambda=0$ and $\lim_{\lambda\to\infty}\tau_\lambda=T$. This implies both $\int_0^{\tau_\lambda}\xi(s)ds$ converges to zero as $\lambda\to-\infty$ and to $\int_0^T\xi(s)ds$ as $\lambda\to\infty$. Dominated (for $\lambda\to-\infty$) and monotone (for $\lambda\to\infty$) convergence (separately for $\xi(s)^+$ and $\xi(s)^-$) implies that this also holds for the integral with respect to $\nu$. Recall that we assume that $\nu\int_0^T\xi(s)^-ds<\infty$. Also note that since $\tau(\alpha)$ is nondecreasing in $\alpha$, then $\nu\int_0^{\tau(\alpha)}\xi(s)^\pm ds$ are nondecreasing in $\alpha$. Thus they have a limit as $\alpha$ converges to zero or to $\mu T$ (which for the latter, with $\xi(s)^+$, could be infinite). Thus if we take $\alpha(\lambda)=\mu\tau_\lambda$ then the same limits are obtained when $\lambda\to\pm\infty$.
\end{proof}

We observe that if instead the constraint $\tau\in[0,T]$ a.s. we take $\tau\in[S,T]$ a.s. where $S\in\mathcal{X}$ satisfies $\mu S<\infty$ and $0\le S\le T$ $\mu$-a.s., then upon taking $\tilde\xi(t)=\xi(S+t)$, $\tilde T=T-S$ and $\tilde\alpha=\alpha-\mu S$, we are back to the original setup. Therefore, Theorem~\ref{th:main1} gives a solution for this case as well. Note that for this optimization problem we may take $\xi(\cdot)$ to be indexed by $\mathbb{R}$ on and there is no need to assume that $S,T$ are nonnegative.

Finally we also observe that if $\mu$ and $\nu$ are equivalent measures, then the problem \eq{minalpha} may be replaced by a problem in which $\mu=\nu$, but the equality $\mu\tau=\alpha$ is replaced by $\mu A\tau=\alpha$ where $A$ is strictly positive (and finite). This implies the following two corollaries for two special cases. The first is when $\mu$ is replaced by a product measure associated with a counting measure and a probability measure and the second is where $\mu$ is replaced by the product of Lebesgue measure and a probability measure. It will be useful to refer to those in the examples that will appear later. The straightforward proofs are omitted.

\begin{corollary}\label{cor:main2}
Given a probability space $(\Omega,\mathcal{F},P)$, assume that $\{\xi_i(t)|t\ge 0\}$, are nondecreasing right continuous stochastic processes with $P(|\xi_i(t)|<\infty)=1$ for all $t>0$ and $i$ in some finite or countable index set. Let $T_i$ be nonnegative (possibly infinite) random variables satisfying
\begin{equation}
\sum_iE\int_0^{T_i}\xi_i(s)^-ds<\infty
\end{equation}
and let $A_i$ be positive and finite random variables.
Consider the following stochastic optimization problem for $\alpha\in \left(0,\sum_iET_i\right)$.
\begin{align}\label{eq:multi}
\min\ &\sum_i E\int_0^{\tau_i}\xi_i(s)ds\nonumber\\
s.t.\ &\tau_i\in\mathcal{F}\ ,\ \forall i \\&\tau_i\in[0,T_i]\ P\text{-a.s.},\ \forall i \nonumber\\
&\sum_i EA_i\tau_i=\alpha\nonumber
\end{align}
Denote $\tau_{i,\lambda}=\inf\{t|\xi_i(t)\ge A_i\lambda\}\wedge T_i$. If there exists a $\lambda$ satisfying $\sum_iEA_i\tau_{i,\lambda}=\alpha$ or $\sum_iEA_i\tau_{i,\lambda+}=\alpha$, then, respectively, $\tau_{i,\lambda}$ or $\tau_{i,\lambda+}$ for all $i$, solve \eq{multi}. Otherwise, either $\sum_iEA_i\tau_{i,\lambda}=\infty$ for all $\lambda\in\mathbb{R}$ or there exists a $\lambda$ for which $\sum_iEA_i\tau_{i,\lambda}<\alpha<\sum_i EA_i\tau_{i,\lambda+}$. If $\sum_i EA_i\tau_{i,\lambda+}<\infty$, let
\begin{equation}\label{eq:qmulti}
q=\frac{\alpha-\sum_iEA_i\tau_{i,\lambda}}{\sum_i EA_i(\tau_{i,\lambda+}-\tau_{i,\lambda})}\ .
\end{equation}
Then, $(1-q)\tau_{i,\lambda}+q\tau_{i,\lambda+}$, for $1\le i\le n$, solves \eq{multi}. \end{corollary}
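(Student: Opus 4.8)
The plan is to exhibit \eq{multi} as a single instance of the $A$-weighted form of Theorem~\ref{th:main1} described in the observation immediately preceding the corollary, carried out on a product space that folds the index $i$ into the underlying point $x$. Concretely, I would take $X=\Omega\times I$ with the product $\sigma$-algebra $\mathcal{X}=\mathcal{F}\otimes 2^I$ (the discrete $\sigma$-algebra on the countable set $I$), and let $\mu=P\otimes c$ where $c$ is counting measure on $I$. Since $P$ is a probability measure and $c$ is $\sigma$-finite on a countable set, $\mu$ is $\sigma$-finite, and for nonnegative $\mathcal{X}$-measurable $g$ one has $\mu g=\sum_i E g(\cdot,i)$.

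Next I would read off the data componentwise, setting $\xi((\omega,i),t)=\xi_i(\omega,t)$, $T(\omega,i)=T_i(\omega)$ and $A(\omega,i)=A_i(\omega)$. Because $I$ carries the discrete $\sigma$-algebra, joint measurability of each of these reduces to measurability of the individual $\xi_i(t)$, $T_i$, $A_i$, while right continuity and monotonicity of $\xi(\cdot,t)$ in $t$ for each fixed $(\omega,i)$ are inherited from $\xi_i$. Writing $\tau(\omega,i)=\tau_i(\omega)$, the dictionary $\mu g=\sum_i E g_i$ then turns the objective into $\sum_i E\int_0^{\tau_i}\xi_i(s)ds$, the range constraint into $\tau_i\in[0,T_i]$ $P$-a.s.\ for every $i$, the integrability hypothesis into $\sum_i E\int_0^{T_i}\xi_i(s)^-ds<\infty$, and $\mu T$ into $\sum_i ET_i$, so that $\alpha\in(0,\mu T)$ is exactly the stated range.

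With the translation in place I would invoke the equivalent-measure observation preceding the corollary: a problem of the form \eq{minalpha} with constraint $\tilde\mu\tau=\alpha$ coincides with the $A$-weighted problem $\mu A\tau=\alpha$ once $d\tilde\mu=A\,d\mu$ with $A$ strictly positive and finite. Here the constraint $\sum_i EA_i\tau_i=\alpha$ is precisely $\mu A\tau=\alpha$, so I set $\tilde\mu$ via $d\tilde\mu=A\,d\mu$ and apply Theorem~\ref{th:main1} with constraint measure $\tilde\mu$ and objective measure $\nu=\mu$ (legitimate since $A>0$ finite makes $\tilde\mu$ $\sigma$-finite and equivalent to $\mu$, with $d\nu/d\tilde\mu=1/A$). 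The theorem's threshold then becomes $\tau_\lambda=\inf\{t\,|\,A^{-1}\xi(t)\ge\lambda\}\wedge T=\inf\{t\,|\,\xi(t)\ge A\lambda\}\wedge T$, i.e.\ componentwise $\tau_{i,\lambda}=\inf\{t\,|\,\xi_i(t)\ge A_i\lambda\}\wedge T_i$. The three cases of Theorem~\ref{th:main1} and the weight $q$ in \eq{qmulti} transcribe verbatim upon noting $\tilde\mu\tau_\lambda=\mu A\tau_\lambda=\sum_i EA_i\tau_{i,\lambda}$, and likewise for $\tau_{\lambda+}$ and for the denominator $\mu A(\tau_{\lambda+}-\tau_\lambda)$.

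The bulk of this is bookkeeping, so I expect no serious obstacle; the two points needing genuine care are the $\sigma$-finiteness and product-measurability checks above, and the claim that the right-limit object $\tau_{\lambda+}$ of the theorem decomposes as the componentwise $\tau_{i,\lambda+}$. The latter holds because $\tau_{\lambda+}=\lim_{s\downarrow\lambda}\tau_s$ is a pointwise (in $x=(\omega,i)$) monotone limit, hence splits across $i$ automatically, and monotone convergence then yields $\mu A\tau_{\lambda+}=\sum_i EA_i\tau_{i,\lambda+}$, legitimizing the passage to the limiting constraint value and the convex-combination solution. This is exactly the sense in which the authors regard the proof as straightforward.
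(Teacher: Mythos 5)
You have it right, and this is precisely the proof the paper has in mind when it says the ``straightforward proofs are omitted'': form the product of $P$ with counting measure on the index set, read the data componentwise, and invoke the equivalent-measure observation preceding the corollary to absorb the weights into the constraint measure via $d\tilde\mu=A\,d\mu$, so that Theorem~\ref{th:main1} with $\nu=\mu$ and $Y=d\nu/d\tilde\mu=1/A$ yields the threshold $\tau_{i,\lambda}=\inf\{t\,|\,\xi_i(t)\ge A_i\lambda\}\wedge T_i$ and the three-case conclusion verbatim. The only wrinkle --- inherited from the paper's statement rather than introduced by you --- is that the dichotomy part of Theorem~\ref{th:main1}, applied with constraint measure $\tilde\mu$, requires $\alpha\in(0,\tilde\mu T)=\left(0,\sum_i EA_iT_i\right)$ rather than the stated $\left(0,\sum_i ET_i\right)$, so your remark that the stated range ``is exactly'' the theorem's range silently identifies $\mu T$ with $\tilde\mu T$; these differ unless the $A_i$ are suitably normalized, and the corollary's range is best read as shorthand for $\left(0,\sum_i EA_iT_i\right)$.
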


\begin{corollary}\label{cor:main3}
Given a probability space $(\Omega,\mathcal{F},P)$ and denoting the Lebesgue measure by $m$ and $ds=m(ds)$, assume that $\xi(s,t)=\xi(\omega,s,t)$ is a measurable process as a function of $(\omega,s)$ for each fixed $t\ge 0$ and nondecreasing right continuous in $t$ for each fixed $(\omega,s)\in \Omega\times [0,\infty)$, with $P(|\xi(s,t)|<\infty)=1$ for all $t>0$ and $m$-almost each $s\in [0,\infty)$. Let $T_s=T_s(\omega)\in\mathcal{B}[0,\infty)\otimes\mathcal{F}$ with $P(T_s\ge 0)=1$ for $m$-almost all $s\in [0,\infty)$ and assume that
\begin{equation}
\int_0^\infty E\int_0^{T_s}\xi(s,t)^-dt\,ds<\infty\ .
\end{equation}
Finally let $A_s=A_s(\omega)$ be a measurable process and $P(0<A_s<\infty)=1$ for $m$-almost all $s\in[0,\infty)$.
Consider the following optimization problem.
\begin{align}\label{eq:cont}
\min\ &\int_0^\infty E\int_0^{\tau_s}\xi(s,t)dt\,ds\nonumber\\
\text{s.t.}\ &\tau_s \text{ is a measurable process}\\
&P(0\le \tau_s\le T_s)=1\ \text{for }m\text{-almost all\ } s\in [0,\infty)\nonumber\\
&\int_0^\infty EA_s\tau_s\,ds=\alpha\ .\nonumber
\end{align}
Let $\tau_{s,\lambda}=\inf\{t|\xi(s,t)\ge A_s\lambda\}\wedge T_s$. If there exists $\lambda\in \mathbb{R}$ such that either $\int_0^\infty EA_s\tau_{s,\lambda}\,ds=\alpha$ or $\int_0^\infty EA_s\tau_{s,\lambda+}\,ds=\alpha$ then, respectively, $\tau_{s,\lambda}$ or $\tau_{s,\lambda+}$, for $s\in[0,\infty)$, solve \eq{cont}. Otherwise, either $\int_0^\infty EA_s\tau_{s,\lambda}\,ds=\infty$ for all $\lambda$ or there exists some $\lambda$ for which $\int_0^\infty EA_s\tau_{s,\lambda}\,ds<\alpha<\int_0^\infty EA_s\tau_{s,\lambda+}\,ds$. When $\int_0^\infty EA_s\tau_{s,\lambda+}\,ds<\infty$, denote
\begin{equation}
q=\frac{\alpha-\int_0^\infty EA_s\tau_{s,\lambda}\,ds}
{\int_0^\infty EA_s(\tau_{s,\lambda+}-\tau_{s,\lambda})\,ds}
\end{equation}
and then $(1-q)\tau_{s,\lambda}+q\tau_{s,\lambda+}$, for $s\in[0,\infty)$, solve \eq{cont}.
\end{corollary}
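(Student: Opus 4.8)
The plan is to deduce Corollary~\ref{cor:main3} from Theorem~\ref{th:main1} by specializing the abstract measure space to the product of Lebesgue measure and the given probability measure, in the spirit of the reduction to a weighted constraint described in the paragraph preceding the corollaries. Concretely, I would take $X=[0,\infty)\times\Omega$, $\mathcal{X}=\mathcal{B}[0,\infty)\otimes\mathcal{F}$, and write a generic point as $x=(s,\omega)$; the data of the corollary then become functions on $X$ via $\xi(x,t)=\xi(s,t)$, $T(x)=T_s(\omega)$, $A(x)=A_s(\omega)$ and $\tau(x)=\tau_s(\omega)$. I would let $\nu=m\times P$ play the role of the objective measure and define the constraint measure $\mu$ by $d\mu=A\,d\nu$. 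Since $P(0<A_s<\infty)=1$ for $m$-almost all $s$, Fubini gives $0<A<\infty$ $\nu$-a.s., so $\mu$ and $\nu$ are equivalent, $\nu\ll\mu$, and $Y:=d\nu/d\mu=1/A$ is the required nonnegative finite version; exhausting $X$ by the sets $\{A\le n\}\cap([0,n]\times\Omega)$, each of finite $\mu$-measure, shows that $\mu$ is $\sigma$-finite.

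Next I would verify the hypotheses of Theorem~\ref{th:main1} under this identification. Joint measurability and the requirement $\xi(t)\in\mathcal{X}$ for each $t$ are exactly the assumed $\mathcal{B}[0,\infty)\otimes\mathcal{F}$-measurability of $(\omega,s)\mapsto\xi(s,t)$; right continuity and monotonicity of $\xi(x,\cdot)$ for every $x$ hold by hypothesis for every $(\omega,s)$; and the $\mu$-a.s.\ finiteness of $\xi(t)$ for $t>0$, the $\mu$-a.s.\ nonnegativity of $T$, and $T\in\mathcal{X}$ all follow from the corresponding ``$P$-a.s.\ for $m$-almost every $s$'' statements by Fubini together with the equivalence $\mu\sim\nu$. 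Finally, the integrability assumption $\nu\int_0^T\xi(s)^-ds<\infty$ of the theorem is, with $\nu=m\times P$, precisely the stated hypothesis $\int_0^\infty E\int_0^{T_s}\xi(s,t)^-dt\,ds<\infty$.

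It then remains to translate the conclusion. Because $Y=1/A$ and $A>0$, the theorem's stopping level becomes
\[
\tau_\lambda=\inf\{t\,|\,Y\xi(t)\ge\lambda\}\wedge T=\inf\{t\,|\,\xi(s,t)\ge A_s\lambda\}\wedge T_s=\tau_{s,\lambda}\,,
\]
so the objective $\nu\int_0^\tau\xi(s)ds$ is the double integral in \eq{cont}, while the constraint $\mu\tau=\int_X A\tau\,d\nu=\int_0^\infty EA_s\tau_s\,ds$ equals $\alpha$, with $\mu T=\int_0^\infty EA_sT_s\,ds$. Reading off the three alternatives of Theorem~\ref{th:main1}---existence of $\lambda$ with $\mu\tau_\lambda=\alpha$ or $\mu\tau_{\lambda+}=\alpha$, the degenerate case $\mu\tau_\lambda=\infty$ for all $\lambda$, and the intermediate case $\mu\tau_\lambda<\alpha<\mu\tau_{\lambda+}$ together with its convex-combination weight $q$---and substituting the expressions just computed reproduces verbatim the three cases of Corollary~\ref{cor:main3}, including the formula for $q$.

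I expect the only genuine work to lie in the first two paragraphs: confirming $\sigma$-finiteness of $\mu$ and, above all, checking that each pointwise hypothesis phrased ``for $m$-almost all $s$, $P$-a.s.'' transfers to a true $\mu$-a.s.\ (equivalently $\nu$-a.s.) statement on $X$, which is a routine Fubini argument combined with the equivalence $\mu\sim\nu$. Once this measure-theoretic translation is in place, the conclusion is a direct substitution into Theorem~\ref{th:main1} requiring no further estimation, which is why the authors describe the proof as straightforward.
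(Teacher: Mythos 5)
Your proposal is correct and is precisely the argument the paper has in mind: the authors omit the proof as ``straightforward,'' pointing to the preceding paragraph where they describe exactly this reduction --- take the base measure to be the product $m\times P$ of Lebesgue measure and the probability measure, replace the constraint measure by the equivalent measure with density $A$ (so that $Y=d\nu/d\mu=1/A$), and read off the conclusion of Theorem~\ref{th:main1}. Your verification of $\sigma$-finiteness, the Fubini transfer of the ``$P$-a.s.\ for $m$-almost all $s$'' hypotheses, and the identification $\tau_\lambda=\tau_{s,\lambda}$ fills in the routine details the paper leaves to the reader.
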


\begin{remark}\label{rem:mixed}{\rm
It should be observed that upon taking $\tau=(1-q)\tau_\lambda+q\tau_{\lambda+}$ in \eq{13} it follows that the first inequality in \eq{13} is actually an equality. Therefore we can take a probability space $([0,1],\mathcal{B}([0,1]),m)$ (where $m$ is Lebesgue measure) and consider the random variable $I=1_{[0,q]}(\omega)$. Then take $(1-I)\tau_\lambda+I\tau_{\lambda+}$ on the space $([0,1]\otimes X,\mathcal{B}([0,1])\otimes\mathcal{X},m\otimes \mu)$ and obtain that
\begin{align}
m\otimes \mu\int_0^{(1-I)\tau_\lambda+I\tau_{\lambda+}}\xi(s)&=(1-q)\mu \int_0^{\tau_\lambda}\xi(s)ds+q\int_0^{\tau_{\lambda+}}\xi(s)ds\nonumber\\
&\le \mu\int_0^\tau\xi(s)ds= m\otimes\mu\int_0^\tau\xi(s)ds
\end{align}
for every $\tau$ satisfying the constraints. In this case we may refer to $(1-I)\tau_\lambda+I\tau_{\lambda+}$ as a {\em mixed} strategy.
}
\end{remark}

\begin{remark}\label{rem:discrete}{\rm
It is easy to check that if $\xi$ is indexed by $\mathbb{Z}_+$ (instead of $[0,\infty)$) and $\tau$ and $T$ are integer valued, then the results of this section continue to hold without change with the exception that $\tau_\lambda$ is defined to be $\inf\{n|\xi(n)\ge \lambda\}\wedge T$ (integer valued) and that instead of $(1-q)\tau_\lambda+q\tau_{\lambda+}$ (which is not necessarily an integer) we need to take a mixed strategy $(1-I)\tau_\lambda+I\tau_{\lambda+}$ as appearing in Remark~\ref{rem:mixed}. Similarly, the same is true for $\tau_{i,\lambda}$ and $\tau_{s,\lambda}$.
}
\end{remark}

\begin{remark}\label{rem:le}{\rm
Observe that if we replace $\mu\tau=\alpha$ in \eq{minalpha} by $\mu \tau\le \alpha$, then from Lemma~\ref{lem:main} it follows that, when finite, $\tau_0$ and $\tau_{0+}$ (as defined in Theorem~\ref{th:main1}), minimize $\int_0^t Y\xi(s)ds$ on $[0,T]\cap[0,\infty)$ for each $x$ and therefore it minimizes the integral with respect to $\mu$. Thus, if $\mu A\tau_0\le \alpha$ then $\tau_0$ is an optimal solution for this modified problem. Otherwise, the optimal solution is the one given in Theorem~\ref{th:main1}. The reason is that $\lambda$ for which $E\tau_\lambda\le \alpha\le E\tau_{\lambda+}$ is necessarily negative and thus replacing $\alpha$ by $\mu\tau\in[0,\alpha]$ in \eq{13} gives on the right side $\mu\int_0^\tau\xi(s)ds-\lambda(\mu\tau-\alpha)$. Since $\lambda<0$ it follows that $-\lambda(\mu\tau-\alpha)\le 0$. This can also be deduced from the convexity reported in Proposition~\ref{prop:1}.
Naturally, the same is valid for Corollaries~\ref{cor:main2} and~\ref{cor:main3}.}
\end{remark}

We now proceed to some examples.

\section{Minimizing a deterministic convex function}\label{sec:deterministic}
When $\xi$ is deterministic we can conclude the following.
\begin{corollary}\label{cor:det}
Assume that $\psi:[0,\infty)\to\mathbb{R}$ is strictly convex, right continuous at zero (deterministic) and $T$ is a nonnegative, finite mean random variable with distribution $F$. Denote
\[
F_e(t)=\frac{1}{ET}\int_0^t(1-F(s))ds
\]
(stationary remaining lifetime distribution). Then for every $p\in(0,1)$
\begin{equation}\label{eq:taup}
\tau_p=F_e^{-1}(p)\wedge T
\end{equation}
minimizes
\begin{align*}
\min\ &E\psi(\tau)\nonumber\\
\text{s.t.}\ &\tau\in [0,T]\ \text{a.s.}\\
&E\tau=p ET\ .\nonumber
\end{align*}
\end{corollary}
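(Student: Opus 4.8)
The plan is to recognize Corollary~\ref{cor:det} as a direct specialization of Theorem~\ref{th:main1} to the deterministic-$\xi$ case, so the work is almost entirely translation of notation rather than new analysis. First I would set up the correspondence: take $\xi(t)=\psi'(t+)$, the right derivative of $\psi$, which exists and is nondecreasing and right continuous because $\psi$ is convex. By Lemma~\ref{lem:main}'s hypotheses, $\varphi=\psi$ plays the role of the convex integrand's antiderivative, so $E\psi(\tau)=E\int_0^\tau\xi(s)\,ds$ up to the additive constant $\psi(0)$, which is irrelevant to the minimization. Since $\psi$ is deterministic, $\xi$ does not depend on $\omega$, and I would apply Theorem~\ref{th:main1} with $\mu=\nu=P$ (so $Y=1$), identifying the constraint $E\tau=p\,ET$ as the case $\alpha=p\,ET\in(0,ET)$.

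Next I would compute $\tau_\lambda=\inf\{t\mid\xi(t)\ge\lambda\}\wedge T=\eta(\lambda)\wedge T$ and the key distributional identity $E\tau_\lambda$ in terms of $F$. The crucial point is that because $\psi$ is \emph{strictly} convex, $\xi=\psi'$ is strictly increasing, so $\eta=\xi^{-1}$ is continuous and strictly increasing; by the remark following Theorem~\ref{th:main1}, this continuity means $E\tau_\lambda$ is continuous in $\lambda$ and there is no need for the convex-combination/mixed-strategy branch. I would then evaluate
\[
E\tau_\lambda=E\big(\eta(\lambda)\wedge T\big)=\int_0^{\eta(\lambda)}(1-F(s))\,ds
\]
using the layer-cake formula $E(a\wedge T)=\int_0^a(1-F(s))\,ds$ for the deterministic constant $a=\eta(\lambda)$. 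Dividing by $ET$ gives $E\tau_\lambda/ET=F_e(\eta(\lambda))$, where $F_e$ is the stationary-excess distribution defined in the statement.

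The matching step is then to choose $\lambda$ so that $E\tau_\lambda=p\,ET$, i.e. $F_e(\eta(\lambda))=p$, equivalently $\eta(\lambda)=F_e^{-1}(p)$. Plugging back yields $\tau_\lambda=\eta(\lambda)\wedge T=F_e^{-1}(p)\wedge T=\tau_p$, which is exactly \eq{taup}. Since the feasibility $\tau_p\in[0,T]$ a.s. and the mean constraint $E\tau_p=p\,ET$ hold by construction, Theorem~\ref{th:main1} delivers optimality of $\tau_p$ for \eq{minalpha}, which here reads as minimizing $E\psi(\tau)$ under the stated constraints.

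I expect the main obstacle to be purely verificational rather than conceptual: confirming that $F_e$ is genuinely invertible at the relevant level so that $F_e^{-1}(p)$ is well defined and $\eta(\lambda)$ can be matched to it. One should check that $F_e$ is continuous and strictly increasing on the support of $T$ (continuity from the integral definition, strict monotonicity wherever $1-F>0$), and handle the range of $\eta$: since $\lambda$ ranges over $\mathbb{R}$ and $\xi$ is strictly increasing, $\eta(\lambda)$ sweeps $[0,\infty)$ continuously, so every $p\in(0,1)$ is attained. Because $ET<\infty$ we have $\mu T<\infty$, so $E\tau_\lambda<\infty$ for all $\lambda$ and the hypotheses of the Proposition/remark guaranteeing continuity of $\lambda\mapsto E\tau_\lambda$ are in force; this is what lets me avoid the degenerate branches of Theorem~\ref{th:main1} entirely. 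A minor technical remark I would include is that strict convexity is used precisely to ensure this continuity, so that the clean closed form \eq{taup} holds without any mixing.
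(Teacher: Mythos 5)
Your proposal is correct and follows essentially the same route as the paper's own proof: specialize Theorem~\ref{th:main1} with deterministic $\xi=\psi'(\cdot+)$, use strict convexity to get continuity of $\eta$ (hence of $E\tau_\lambda$) so no mixing is needed, and identify $\eta(\lambda)=F_e^{-1}(p)$ via $E(\eta(\lambda)\wedge T)=\int_0^{\eta(\lambda)}(1-F(s))\,ds=ET\,F_e(\eta(\lambda))$. The only slip is your claim that $\eta=\xi^{-1}$ is \emph{strictly increasing} --- it need not be when $\xi$ has jumps --- but this is harmless since only continuity of $\eta$ is actually used.
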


\begin{proof}
Since $\psi$ is strictly convex, then its right derivative $\xi$ is strictly increasing and thus $\eta$ is continuous. Thus, there exists $\lambda$ for which $E\eta(\lambda)\wedge T=pET$ and, by Theorem~\ref{th:main1} the optimal solution is $\eta(\lambda)\wedge T$. Now, since $\eta(\lambda)$ is a deterministic constant, then
\begin{equation}
pET=E\eta(\lambda)\wedge T=\int_0^{\eta(\lambda)}(1-F(s))ds=ETF_e(\eta(\lambda))
\end{equation}
from which it follows that $\eta(\lambda)=F_e^{-1}(p)$, where we note that the inverse is well defined since $F_e$ is strictly increasing and continuous on
\begin{equation}
\left[0,\sup\{t|F(t)<1\}\right)\ .
\end{equation}
\end{proof}

We note that when $\psi$ is convex but not strictly convex and for some bounded below strictly convex function $\varphi$ on $[0,\infty)$ we have that $E\varphi(T)<\infty$, then for any $\tau$ satisfying $\tau\in[0,T]$ a.s. and $E\tau=pET$ we have that $E\varphi(\tau)\le \varphi(0)\vee E\varphi(T)<\infty$ and (since $\psi+\epsilon \varphi$ is strictly convex)
\begin{equation}
E\psi\left(F_e^{-1}(p)\wedge T\right)+\epsilon E\varphi\left(F_e^{-1}(p)\wedge T\right)\le E\psi(\tau)+\epsilon E\varphi(\tau)\ .
\end{equation}
Upon letting $\epsilon\downarrow0$ it follows that with the added condition that $E\varphi(T)<\infty$ for some strictly convex function on $[0,\infty)$, Corollary~\ref{cor:det} is valid for any convex function $\psi$.

One immediate special case is  minimizing $\text{Var}(\tau)$ subject to the constraints $\tau\in[0,T]$ almost surely and $E\tau=pET$ for $p\in(0,1)$. We also note that when $T$ is constant then it is easy to check that $F_e^{-1}(p)=pT$ and thus $\tau_p=pT$, as expected. This, of course, also follows from Jensen's inequality as $\psi(\tau_p)=\psi(E\tau)\le E\psi(\tau)$ for any $\tau$ with $E\tau=pET$. In contrast, we recall that for this case it is well known that the maximum is given by $IT$ where $I\sim\text{Bern}(p)$. To refresh one's memory, this follows from
\begin{equation}
\psi(\tau)\le \left(1-\frac{\tau}{T}\right)\psi(0)+\frac{\tau}{T}\psi(T)
\end{equation}
and then taking expected values, noting that $E\tau/T=p$.

Finally, it is interesting to note that the optimal solution in this section does not depend on the choice of the convex function $\psi$. This is not necessarily so when $\psi$ is stochastic. However, recalling that, for any convex $\psi$, an optimal solution to $\min\sum_{i=1}^n \psi(x_i)$ subject to the constraints $x_i\ge 0$ and $\sum_{i=1}^nx_i=\alpha$ is $x_i=\alpha/n$ (which also does not depend on $\psi$), then perhaps we should not be too surprised.

\section{Output rate control in a L\'evy driven storage system}\label{sec: Levy storage}
Consider a regenerative storage process with a nondecreasing Le\'vy
input (subordinator) such that every cycle may be split into two periods. In the first (off) the output is shut off and the workload accumulates. Consider the following cost structure. A constant holding cost per one unit of workload per one unit of time, a constant setup cost for every cycle, a constant output capacity cost rate. For further explanations regarding these costs see \cite{JK19}. A controller who observes the workload level at the beginning of every on period wants to pick an output rate (which may be different from cycle to cycle) in order to minimize the long-run average cost. As explained in \cite{JK19} this leads the the following optimization problem
\begin{equation} \label{optimization: output rate}
\begin{aligned}
& \min:
& & \frac{K_1+K_2EX+hE\left(\frac{V}{2}X+\frac{\mu\rho}{V} X^2\right)}{K_3+EX} \\
& \text{ s.t.}
& & X\text{ is a random variable} \\ & & & X\geq0, \ \ P\text{-a.s.}
\end{aligned}
\end{equation}
where $K_1,K_2,K_3,\mu,\rho,h$ are determined by the parameters of the model and $V$ is the workload level at the beginning of an on period under the assumption $EV^2<\infty$. A two-phase method is applied to solve \eqref{optimization: output rate}. In Phase I an additional constraint $EX=\alpha$ is imposed, leading to
\begin{equation}
\begin{aligned}
& \min:
& & E\left(\frac{V}{2}X+\frac{\mu\rho}{V} X^2\right) \\
& \text{ s.t.}
& & X\text{ is a random variable}\,,\\ & & &EX=\alpha\,, \\ & & & X\geq0, \ \ P\text{-a.s.}
\end{aligned}
\end{equation}
This is a special case of the setup in Section 2. For further details see \cite{JK19}.

\section{Optimizing a quadratic function with random coefficients}\label{sec:quadratic}
Keeping with the same guideline of the example of Section \ref{sec: Levy storage}, consider the problem
\begin{align}
	\min\ &E\left(A\tau^2+B\tau+C\right)\nonumber\\
	\text{s.t.}\ &\tau\text{ is a random variable}\nonumber\\
	&\tau\in[0,T]\ \text{a.s.}\\
	&ED\tau=\alpha \nonumber
	\end{align}
	for any a.s. finite random variables $A,B,C,D,T$ with $ET<\infty$, $EB^-<\infty$, $EC^-<\infty$ and $P(A>0)=P(D>0)=1$ (having an arbitrary joint distribution). Then, the assumptions of Corollary~\ref{cor:main2} are met with $n=1$, $\xi(t)=2At+B$ for every $t\geq0$ and
\begin{align}
\tau_{\lambda}=\frac{(D\lambda-B)^+}{2A}\wedge T=\frac{(\lambda-B/D)^+}{2A/D}\wedge T\ .
\end{align}
Thus, for this case, if $ET<\infty$ then, by continuity (and dominated convergence), for every $\alpha\in (0,EDT)$ there always is a (finite) $\lambda$ such that $ED\tau_{\lambda}=\alpha$. We also recall that for $\alpha=0$ the solution is a.s. zero and for $\alpha=ET$ it is a.s. $T$.

We also note that the special case of a uniform finite probability space results in the (deterministic) quadratic program and its solution reported in \cite{HKL80}.

\section{Optimizing a linear function with random coefficients}\label{sec:linear}
When the goal is to solve the following problem
\begin{align}
\min\ &E\left(A\tau+B\right)\nonumber\\
\text{s.t.}\ &\tau\in[0,T]\ \text{a.s.}\\
&EC\tau=\alpha \nonumber
\end{align}
where $P(A>0)=P(C>0)=1$ and $EB^-<\infty$, we simply take $\xi(t)=A$ for every $t\ge 0$. For this case we have that $\xi(t)=A$ and thus
\begin{equation}
\tau_\lambda=\inf\{t|A\ge C\lambda\}\wedge T=T1_{\{A/C<\lambda\}}\ .
\end{equation}
As in the quadratic case, $A,B,C,T$ may have an arbitrary joint distribution.

Note that the same holds in the discrete time case, where we recall Remark~\ref{rem:discrete}. In particular, if we take $T=1$ this results in an alternative (but, admittedly, somewhat less elegant) proof of the Neyman-Pearson Lemma or, more generally, uniformly most powerful tests for this setup, where we would like to test the hypotheses
\begin{equation}
\begin{cases}
H_0: P&=P_0\\
H_1: P&=P_1\ ,
\end{cases}
\end{equation}
where $P_0,P_1$ are absolutely continuous with respect to a common $\sigma$-finite measure, under either of the constrains $P_0\tau=\alpha$ or $P_0\tau\le \alpha$. See \cite{Cvitanic2001} for an in-depth treatise of such (and more general) problems which exploits convexity. In particular, compare equations (1.8)-(1.12) there to what appears here.

\section{Relation to portfolio selection} \label{sec:portfolio}
In this section we refer to the classical model of dynamic utility maximization with consumption in a complete continuous-time stock market which is presented in Section 3 of \cite{Korn1997} (see also Section 3 of \cite{KS1998}). In general, the model is about an agent who must dynamically decide how to manage a trade-off between consumption over time and terminal wealth. The model has a fixed finite horizon $T\in(0,\infty)$ and the agent's initial wealth is given by a fixed parameter of the model $x_0\in(0,\infty)$. Furthermore, the agent's utility from consumption of $c\geq0$ units at time $t\geq0$ is given by $U_1(t,c)$. In addition, the agent's utility from a terminal wealth of $x\geq0$ is $U_2(x)$. In addition, $U_1$ and $U_2$ are deterministic functions such that for every $c\geq0$, $U_1(\cdot,c)$ is continuous and for every $t\in[0,T]$, $U_1(t,\cdot)$ and $U_2(\cdot)$ are assumed to be continuously differentiable strictly concave functions with derivatives that satisfy some additional conditions. The exact model description including the stochastic modelling of prices appears in the above-mentioned references. In these references it is shown that one approach to solving the dynamic problem is to first solve the following dual problem (in the literature it is often referred as a  \textit{static} problem \textit{\textit{e.g}.} in \cite{Korn1997} see the last paragraph of page 37 or the first paragraph of Subsection 3.4)

\begin{equation} \label{optimization: portfolio}
\begin{aligned}
& \underset{\{c_t;t\in[0,T]\},X}{\text{maximize}}
& & E\left[\int_0^T U_1(t,c_t)dt+U_2(X)\right] \\
& \text{subject to}
& & X\geq0\ ,\ c_t\geq0\ , \forall t\in[0,T] \ \ , \ \ P-a.s. \\  & & & E\left[\int_0^TH_tc_tdt+H_TX\right]\leq x_0
\end{aligned}
\end{equation}
where $T,x_0>0$ are constants, $\{H_t;t\in[0,T]\}$ is a certain $P$-a.s. positive martingale with respect to a certain filtration $\mathbb{F}$ which represents the  information flow to the agent. The distribution of this martingale is determined endogenously by the model setup and defined at the beginning of Section 2.3 of \cite{Korn1997}. Importantly, this process is not influenced by the decision variables of the optimization. In addition, we also consider the case where either $U_1$ or $U_2$ is identically zero. With these assumptions, as mentioned by \cite{JXZ08}, even when $U_1$ is identically zero, Lagrange multipliers for this problem may not exist and hence the Lagrange method is not always applicable. Another solution which is based on the exact definition of the process $\{H_t;t\geq0\}$ is provided in Section 3.4 of \cite{Korn1997}. The solution of \eqref{optimization: portfolio} can be obtained by our Corollary~\ref{cor:main3}. In particular, one does not need to assume any differentiability assumptions on the utility functions, as is usually assumed in this literature. Moreover,  since $U_1$ and $U_2$ are deterministic, then it can be seen that the solution of \eqref{optimization: portfolio} which is specified by Corollary \ref{cor:main3} is adapted to the filtration generated by $H(\cdot)$ and hence also adapted with respect to $\mathbb{F}$.

Furthermore, the assumption that the utility functions are deterministic can also be relaxed. Namely, take $U_2(\omega,x)$ to be $P$-a.s. concave in $x$ for each $\omega$. In such a case the agent has random utility from terminal wealth. For further details about models with random utilities see, \textit{e.g.}, \cite{Fishburn1998}.  For example, one can think about models with agents whose preferences are determined by a random variable which denotes the agents' types. Now, the assumption is that for every type the utility is convex w.r.t the terminal wealth. The same can be done with respect to $U_1$. Now, let $S$ and $V$ be two nonnegative random variables. In addition assume that $\{\Gamma_t;t\in[0,T]\}$ and $\{\Upsilon_t;t\in[0,T]\}$ are two nonnegative stochastic processes. In particular, assume that these random quantities are exogenous to the model, \textit{\textit{i.e}}. they are not influenced by the choice of $X$ and $\{c_t;t\geq0\}$. The requirement that $X$ and $\{c_t;t\geq0\}$ are nonnegative could be replaced by the constraint
\begin{equation*}
X\in[S,V]\ \ , c_t\in[\Gamma_t,\Upsilon_t],\forall t\in[0,T]\ \ , \ \ P-a.s.
\end{equation*}
to which the results of Section~\ref{sec:main} still apply.  Examples of models considering such constraints are, \textit{e.g}., \cite{Jihan2017,Koo2016,Korn2005,Lanker2006}. Another case which is also covered by the current work is when $T=\infty$ and $U_2$ is identically zero.  Finally, note that this kind of optimizations is also motivated by discrete time models (see, \textit{e.g}., Section 3.3 of \cite{Follmer2011}).

\section{Optimal clearing times in a regenerative clearing process}\label{sec:clearing}
As described by \cite{stidham73}, ``a stochastic clearing system is characterized by a non-decreasing stochastic input process , where $Y(t)$ is the cumulative quantity entering the system in $[0,t]$, and an output mechanism that intermittently and instantaneously clears the system, that is, removes all the quantity currently present." In particular, the clearing system is regenerative if the workload which is associated with this system is a regenerative process. Such systems have been extensively studied in the literature, \textit{e.g}., see \cite{JK2019,kellastadje15,Sigman1993}.

Now, if $\xi$ is a nonnegative process then we can think of $\{\xi(t)|0\le t<\tau\}$ as the first cycle of a (regenerative) clearing process. When $E\tau<\infty$, for such a clearing process an ergodic distribution exists and if $\xi^*$ has this distribution then we have that for any nonnegative Borel $g$,
\begin{equation}
Eg(\xi^*)=\frac{1}{E\tau}E\int_0^\tau g(\xi(s))ds\ .
\end{equation}
Note that if $g$ is a nonnegative, nondecreasing and right continuous function then $g(\xi(\cdot))$ is a nonnegative, nondecreasing right continuous process and we can apply the results of Section~\ref{sec:main} to optimize $Eg(\xi^*)$ subject to the constraints in \eq{minalpha}. This also provides a method for solving the following optimization problem for any given $K> 0$ and nonnegative, nondecreasing right continuous $g$:
\begin{align}
\min\ &\frac{K+E\int_0^\tau g(\xi(s))ds}{E\tau}\nonumber\\ 
\text{s.t.}\ &\tau\in (0,T]\ \text{a.s.}
\end{align}
In this case the cost structure is a setup cost $K$ incurred right after each clearing and a (possibly nonlinear) holding cost function $g$.
The solution is obtained by first restricting the minimization to feasible $\tau$'s satisfying $E\tau=\alpha\in(0,ET]$ to obtain
\begin{equation}\label{eq:h}
h(\alpha)=\frac{K+E\int_0^{\tau(\alpha)}g(\xi(s))ds}{\alpha}
\end{equation}
where $\tau(\alpha)$ denotes the optimal solution from Theorem~\ref{th:main1} and then $h$ is minimized over $(0,ET]$ either analytically, when possible, or numerically.

Finally we observe that for every $a,b\ge0$ such that $a+b>0$ and every $x,y$ we have that
\begin{equation}
\frac{ax+by}{a+b}\ge x\wedge y\,.
\end{equation}
With the notations from \eq{p}, setting
\begin{align}
a&=(1-q)E\tau_{\lambda}\nonumber\\
b&=q E\tau_{\lambda+}\\
x&=\frac{K+E\int_0^{\tau_{\lambda}}g(\xi(s))ds}{E\tau_{\lambda}}\nonumber\\
y&=\frac{K+E\int_0^{\tau_{\lambda+}}g(\xi(s))ds}{E\tau_{\lambda+}}\,,\nonumber
\end{align}
we infer that
\begin{equation}
\frac{K+E\int_0^{\tau(\alpha)}g(\xi(s))ds}{\alpha}\ge \frac{K+E\int_0^{\tau_{\lambda}}g(\xi(s))ds}{E\tau_\lambda}
\wedge \frac{K+E\int_0^{\tau_{\lambda+}}g(\xi(s))ds}{E\tau_{\lambda+}}\ .
\end{equation}
This implies (recall \eq{h}) that
\begin{align}\label{eq:inf}
\inf_{{\tau\in(0,T] \atop \text{a.s.}}}
\frac{K+E\int_0^\tau g(\xi(s))ds}{E\tau}&=
\inf_{\alpha\in(0,ET]}h(\alpha)\nonumber\\
&=\inf_{\lambda|E\tau_\lambda>0}\frac{K+E\int_0^{\tau_\lambda}g(\xi(s))ds}{E\tau_\lambda}\ .
\end{align}
Therefore, for this optimization problem it suffices to restrict attention to random times of the form $\tau_\lambda$, for $\lambda$ such that  $E\tau_\lambda>0$. Note that $E\tau_\lambda>0$ if and only if $P(\tau_\lambda>0)>0$. Since $\eta(\lambda)>0$ if and only if $\lambda>g(\xi(0))$, we can replace `$\lambda|E\tau_\lambda>0$' on the right hand side of \eq{inf} by `$\lambda|P(g(\xi(0))<\lambda,T>0)>0$'. In particular, when $\xi(0)$ independent of $T$ (in the literature it is usually assume to be zero, so that this independence is automatic), then this results in `$\lambda|\lambda>g(\xi(0))$'.

\section{Example: Renewal counting process $\xi$ with independent $T\sim\exp(\mu)$ and a bit more}\label{sec:renewal}
Assume that $T\sim\exp(\theta)$ is independent of $\{\xi(t);t\geq0\}$ which is a renewal counting process with inter-renewal times distributed like some $X$. As usual, it is assumed that $P(X\ge 0)=1$ and $P(X=0)<1$. Since $\xi(0)=0$, then $\eta(\lambda)=0$ for every $\lambda\leq0$ and, for every $\lambda\geq0$,
\begin{align}
\eta(\lambda)&=\inf\{t\geq0;\xi(t)\geq\lambda\}\\&=\inf\{t\geq0;\xi(t)\geq\lceil\lambda\rceil\}=S_{\lceil\lambda\rceil}\nonumber
\end{align}
where $S_{\lceil\lambda\rceil}$ is the $\lceil\lambda\rceil$th renewal time. Therefore, for every $\lambda\geq0$, $\tau_\lambda=T\wedge S_{\lceil\lambda\rceil}$, so that
\begin{align}
E\tau_\lambda &=T\wedge S_{\lceil\lambda\rceil}=E\int_0^{S_{\lceil\lambda\rceil}}e^{-\theta t}dt=\frac{1}{\theta}\left(1-Ee^{-\theta S_{\lceil\lambda\rceil}}\right)\nonumber\\ \\
&=\frac{1}{\theta}\left[1-\left(Ee^{-\theta X}\right)^{\lceil\lambda\rceil}\right]\ .\nonumber
\end{align}
For integer valued $\lambda$ we have that $\tau_{\lambda+}=\tau_{\lambda+1}$ and otherwise $\tau_{\lambda+}=\tau_\lambda$.
It is easily verified that with
\begin{equation}
\lambda_\alpha=\left\lfloor\frac{\log\left(1-\theta\alpha\right)}{\log Ee^{-\theta X}}\right\rfloor\,,
\end{equation}
for $\alpha\in(0,\theta^{-1})$, we either have that
$E\tau_{\lambda_\alpha}=\alpha$ or $E\tau_{\lambda_\alpha+}=E\tau_{\lambda_\alpha+1}=\alpha$ or $E\tau_{\lambda_\alpha}<\alpha< E\tau_{\lambda_\alpha+1}$, in which case the optimal solution is
$(1-q)\tau_{\lambda_\alpha}+q\tau_{\lambda_\alpha+1}$ where $q$ is given by \eq{p}.

If $T$ has a finite mean and is independent of $\xi$ but does not have an exponential distribution, then
\begin{equation}
E\tau_\lambda=\int_0^\infty (1-F_T(t))(1-F_X^{*\lceil\lambda\rceil}(t))dt=ET\int_0^\infty f_e(t)(1-F_X^{*\lceil\lambda\rceil}(t))dt
\end{equation}
where $f_e(t)=(1-F_T(t))/ET$ and
$F_T$ and $F_X$ are the cumulative distribution functions of $T$ and $X$, respectively. In this case there is no explicit formula for $\lambda_\alpha$, but in many cases it can be computed numerically. A case which is worth pointing out is when $X\sim\exp(\theta)$. In this case it can be easily verified that
\begin{equation}
E\tau_\lambda=ET\sum_{k=0}^{\lceil\lambda\rceil-1}Ee^{-\theta T_e}\frac{(\theta T_e)^k}{k!}
\end{equation}
where $T_e$ has a distribution with density $f_e$. In this case
\begin{equation}Ee^{-\theta T_e}=\frac{1-Ee^{-\theta T}}{\theta ET}\end{equation}
and the $k$th derivative of this function with respect to $\theta$ is given by
\begin{equation}
(-1)^k Ee^{-\theta T_e}T_e^k\ .
\end{equation}
Thus, in principal, the knowledge of $Ee^{-s T}$ for every $s\ge 0$ gives us a procedure for finding everything that is needed in order to compute the optimal $\tau$ in this case.

\section{Separable convex optimization with linear constraints}\label{sec:separable}
Obviously, Corollary~\ref{cor:main2} can be applied to the following optimization problem in which $f_i$ are convex (not necessarily differentiable) functions, $t_i$ nonnegative reals (possibly infinite) and $a_i$ are strictly positive and finite.
\begin{align}
\min\ &\sum_{i=1}^nf_i(x_i)\nonumber\\
\text{s.t.}\ &x_i\in[0,t_i]\cap[0,\infty)\ , \ \forall 1\le i\le n\\
&\sum_{i=1}^na_ix_i=\alpha\nonumber
\end{align}
When $f_i$ are differentiable, $t_i=\infty$, $a_i=1$ and $\alpha=1$, the results are consistent with the famous Gibb's Lemma, noting that if $\xi_i(0)\ge \lambda$ then necessarily $\eta_i(\lambda)=0$. This is a standard convex optimization problem with a separable objective function and linear constraints and the number of references is huge (for the case where $f_i$ are differentiable). For example, quite a few examples are given in \cite{Patriksson2008}. The standard solution (under differentiability assumptions) is by applying the Karush-Kuhn-Tucker conditions or Gibb's Lemma.

\section{Regulation of $M/G/1$ system}\label{sec: regulation}
Consider a single-server first-come-first-served $M/G/1$ queue with an arrival rate $\lambda$ where the service demand distribution is determined endogenously by the following mechanism: Each customer decides during service when to terminate that service and leave the system. This decision is influenced by three factors. Linear waiting time cost (excluding service time),  a marginal utility modelled by a general nonincreasing right-continuous stochastic process which is observed from the moment service begins until departure from the system and a price the customer pays the system which is some function of the time this customer occupies the server. Now, customers must join the queue, so reneging or abandonments are not allowed. Therefore, their decisions concern only the question of when to quit service after it has begun. Hence, for every customer the decision is an optimal stopping problem with respect to the information generated by their marginal utilities. Since the marginal utilities of the customers are iid processes which are independent from the arrival process, then so are the resulting decisions of the customers. Thus, eventually the model under examination is a regular $M/G/1$ queue with service distribution which is determined endogenously by the mechanism above. The question is how to determine a price function which implies optimal resource allocation from a social point of view? Assume that the social optimality criterion is the expected utility of a customer in steady-state for the resulting $M/G/1$ system for certain family of price functions. Now, instead of solving this problem directly, \cite{jacobovic2020} first optimizes the performance measure  over all service distributions. Then, once an optimal service distribution is derived, it turns out to be possible to construct a price function for which the corresponding optimal stopping times of the customers are distributed according to the optimal service distribution which was initially derived. If $V(\cdot)$ is a process with the distribution of the marginal utilities of the customers and we assume that $V(0)$ is a nonnegative random variable such that $EV^2(0)<\infty$, then  an optimal service distribution is a solution of the problem
\begin{equation} \label{optimization: phase I-regulation}
\begin{aligned}
& \max:
& &E\int_0^S\left[ V(s)-s\frac{\lambda}{1- \lambda ES}\right]ds \\
& \ \text{s.t:}
& & S \text{ is a random variable}\,, \\ & & & S\geq0 \ , \ P \text{-a.s.}\,, \\ & & & ES<\lambda\,.
\end{aligned}
\end{equation}
To solve this problem, as in Sections \ref{sec: Levy storage} and \ref{sec:clearing}, \cite{jacobovic2020} includes a two-phase method. Given some $\alpha\in\left[0,\lambda^{-1}\right)$, Phase I solves \eqref{optimization: phase I-regulation} with an additional constraint $ES=\alpha$, \textit{\textit{i.e}}.,
\begin{equation} \begin{aligned}
& \max:
& &E\int_0^S\left[ V(s)-s\frac{\lambda}{1- \lambda \alpha}\right]ds \\
& \ \text{s.t:}
& & S \text{ is a random variable}\,, \\ & & & S\geq0 \ , \ P \text{-a.s.}\,, \\ & & & ES=\alpha\,.
\end{aligned}
\end{equation}
This is a special case of the optimization solved in Section \ref{sec:main}.

\end{document}